\documentclass[12pt]{article}
\usepackage{amsgen,amsmath,amstext,amsbsy,amsopn,amsfonts,amssymb}
%\magnification=1200
%\vsize=9.5truein
%\hsize=6.5truein
%\nopagenumbers 
%\nologo
\newtheorem{theorem}{Theorem}
\newtheorem{lemma}[theorem]{Lemma}
\newtheorem{corollary}[theorem]{Corollary}
\newtheorem{proposition}[theorem]{Proposition}
\newtheorem{obs}[theorem]{Observation}

 \newtheorem{defi}[theorem]{Definition}

\newtheorem{exa}[theorem]{Example}

\newtheorem{rem}[theorem]{Remark}

\newtheorem{rems}[theorem]{Remarks}

\newtheorem{ack}[theorem]{Acknowlegment}

\def\bsq{\blacksquare\medskip}

%%%%%%%%%%%%%%%%DEFINITIONS SYMBOLES MATHEMATIQUES%%%%%%%%%%%%%%%%%%%%%%%%%%%%%%%%%%%%%%%%%%%%%
\def\H{\mathcal H}

\def\Hpi{(\pi, {\mathcal H})}

\def\CCC{{\mathbb C}}

\def\RR+{{\mathbb R}^*}

\def\Q_p{{\mathbb Q}_p}

\def\eps{\varepsilon}

\def\La{\Lambda}

%\topmatter
\begin{document}

\title{A spectral gap property for subgroups of finite covolume in  Lie groups}
\author{Bachir Bekka and Yves Cornulier}
\date{\today }

%22E40 Discrete subgroups of Lie groups
%28D15 General groups of measure-preserving transformations
%43A85 Analysis on homogeneous spaces

\maketitle

\begin{abstract}
Let $G$ be a real  Lie group   and
$H$ a lattice or, more generally, a closed subgroup of finite covolume in $G$.
We show that  the  unitary representation $\lambda_{G/H}$
of $G$  on $L^2(G/H)$ has a spectral gap,
that is, the restriction of $\lambda_{G/H}$ to the
orthogonal of the constants in $L^2(G/H)$
does not have almost invariant vectors.
This answers a question of G.~Margulis.
We give an application  to the spectral geometry
of locally symmetric Riemannian  spaces of infinite volume.

\end{abstract}

\section{Introduction}
Let $G$ a locally compact  group. 
Recall that a unitary representation $\Hpi$
of $G$  {almost has invariant vectors} 
if, for every compact subset $Q$ of $G$ and every 
$\varepsilon>0,$ there exists a unit vector $\xi\in\H$ such that
$\sup_{x\in Q}\Vert \pi(x)\xi-\xi\Vert <\varepsilon.$
If this holds, we also say that the trivial representation $1_G$
is weakly contained in $\pi$ and write $1_G\prec \pi.$

Let $H$ be a closed subgroup of $G$
for which there exists  a $G$-invariant regular Borel measure
$\mu$ on $G/H$  (see \cite[Appendix~B]{BHV} for a criterion
of the existence of such a measure). 
Denote by $\lambda_{G/H}$ the unitary representation
of $G$ given by 
left translation on the Hilbert space $L^2(G/H,\mu)$ 
of the square integrable measurable functions
on the homogeneous space $G/H.$ 
If $\mu$ is finite, we say that $H$ has finite covolume
in $G.$ In this case, the space $\CCC 1_{G/H}$ of the constant functions on $G/H$
is contained in $L^2(G/H,\mu)$ and is 
$G$-invariant as well as its orthogonal
complement 
$$
L^2_0(G/H,\mu)=\left\{\xi\in L^2(G/H,\mu)\, :\, \int_{G/H} \xi(x) d\mu(x)=0\right\}.
$$
In case $\mu$ is infinite, we set $L^2_0(G/H,\mu)= L^2(G/H,\mu).$

Denote by $\lambda_{G/H}^0$ the restriction of $\lambda_{G/H}$
to $L^2_0(G/H,\mu)$ (in case $\mu$ is infinite, $\lambda_{G/H}^0=\lambda_{G/H}$).
We say that $\lambda_{G/H}$ (or $L^2(G/H,\mu)$)  has a \emph{spectral gap}  if  $\lambda_{G/H}^0$   has 
no  almost invariant vectors.
In the terminology of \cite[Chapter III. (1.8)]{Margulis}, $H$ is called weakly cocompact.

By a Lie group we mean a locally compact group $G$ whose connected component of the identity
$G^0$ is open in $G$ and is a real Lie group.
We prove the following result which has been conjectured in \cite[Chapter III.  Remark 1.12]{Margulis}.
\begin{theorem}
\label{Theo1} Let $G$ be a  Lie group   and
$H$ a closed subgroup  with finite covolume in $G$.
Then the  unitary representation $\lambda_{G/H}$ on $L^2(G/H)$ has a spectral gap.
\end{theorem}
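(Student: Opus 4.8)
The plan is to reduce, via the structure theory of Lie groups, to a semisimple quotient where the statement is available, and to dispose of the amenable radical by a Mautner-type spectral argument. First I would reduce to $G$ connected: since $G^0$ is open and normal, the coset space $G/HG^0$ is discrete, so finiteness of the invariant measure on $G/H$ forces $[G:HG^0]<\infty$; replacing $G$ by $G^0$ and $H$ by $H\cap G^0$ (still of finite covolume), it suffices to rule out $G^0$-almost invariant vectors, since any $G$-almost invariant vector is a fortiori $G^0$-almost invariant. So assume $G$ connected, let $W$ be its amenable radical, and set $S=G/W$, a connected semisimple Lie group without compact factors. The image $\overline{HW}/W$ is a closed subgroup of finite covolume in $S$ (push the finite invariant measure forward), and I would take as an input the semisimple case: $\lambda^0_{S/(\overline{HW}/W)}$ has a spectral gap (Property (T) in higher rank; finiteness of the spectrum below the continuous spectrum in rank one).

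Next I would set $P=\overline{HW}$ and use that $W$ is normal to identify the $W$-fixed subspace of $L^2(G/H)$ with $L^2(W\backslash G/H)=L^2(G/P)$, on which the $G$-action factors through $S$ and which therefore has a spectral gap on its $L^2_0$-part by the previous step. Since $W\triangleleft G$, the orthogonal projection $p$ onto the $W$-fixed vectors commutes with $\lambda_{G/H}$, so it splits off a subrepresentation. Given a sequence $(\xi_n)$ of $G$-almost invariant unit vectors in $L^2_0(G/H)$, I would write $\xi_n=\xi_n'+\xi_n''$ with $\xi_n'=p\xi_n\in L^2_0(G/P)$ and $\xi_n''$ its orthogonal part; each piece is again $G$-almost invariant. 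If $\|\xi_n''\|\not\to 0$, I pass to $\xi_n''/\|\xi_n''\|$; otherwise $\xi_n'$ becomes an asymptotically unit $G$-almost invariant vector in $L^2_0(G/P)$, contradicting the semisimple case. Thus it remains to rule out $G$-almost invariant unit vectors in the complement $\H'$ of the $W$-fixed subspace.

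For this last step I would run a Mautner-type argument. Reducing by induction on the derived length of $W$ (peeling off the last nonzero term of the derived series, which is abelian and normal in $G$), I may assume $W$ abelian; using finite covolume to discard compact and cocompact-lattice directions, treat $W$ as a vector group and disintegrate $\lambda_{G/H}|_W=\int^\oplus_{\widehat W}\H_\chi\,d\nu(\chi)$. On $\H'$ the spectral measure gives no mass to the trivial character $\mathbf 1$, while $W$-almost invariance of the $\xi_n$ forces their spectral measures to concentrate near $\mathbf 1$. The conjugation action of $S$ on $\widehat W$ fixes only $\mathbf 1$ (no nonzero $S$-fixed character, as $W$ is the full amenable radical), so by the Mautner phenomenon a suitable one-parameter subgroup of $S$ pushes every punctured neighborhood of $\mathbf 1$ off to infinity; combining this with $S$-almost invariance contradicts the concentration near $\mathbf 1$ on $\H'$. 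This Mautner step — making the spectral concentration quantitative and handling the passage through the non-abelian solvable radical together with the $S$-action on $\widehat W$ — is the main obstacle; everything else is bookkeeping around induction in stages, $\lambda_{G/H}=\Ind_P^G\lambda_{P/H}$, and the two reductions above.
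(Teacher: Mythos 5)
The pivotal claim in your Mautner step is false as stated: when $W$ is the amenable radical of a connected $G$, the conjugation action of $S=G/W$ on $\widehat W$ can fix much more than the trivial character, because $S$ may centralize part (or all) of $W$. Take $G=PSL_2(\RRR)\times \RRR$: the amenable radical is $W=\RRR$, and $S=PSL_2(\RRR)$ acts trivially on $W$, hence fixes \emph{every} character of $\widehat W$. For $H=\Gamma\times\ZZ$ ($\Gamma$ a lattice in $PSL_2(\RRR)$), no one-parameter subgroup of $S$ moves any spectral mass near $\mathbf 1$ off to infinity; the spectral gap transverse to the $W$-fixed subspace comes instead from the fact that the projection of $H$ to the radical direction is cocompact, so the $W$-spectrum in the complement of the fixed space lives on $2\pi\ZZ\setminus\{0\}$, uniformly away from $\mathbf 1$. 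Your parenthetical ``using finite covolume to discard compact and cocompact-lattice directions'' is precisely where the difficulty of the theorem is concentrated, and it is not bookkeeping: the required input is Wang's theorem that, for a Levi decomposition $G=SR$ with $C$ the maximal compact normal subgroup of $S$, the set $HCR$ is \emph{closed} in $G$ and $HCR/H$ is \emph{compact}. Without this, your induction on the derived length of $W$ cannot even start, since the intermediate groups $\overline{H\,W^{(k)}}$ need not be closed or interact well with $H$ at each stage (lattices in solvable pieces are notoriously badly positioned relative to the derived series), and on the $S$-centralized part of $\widehat W$ the Mautner mechanism contributes nothing.

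The paper avoids spectral disintegration entirely: it transfers spectral gaps along chains $H_1\subset H_2\subset G$ via Margulis's Proposition (if $\lambda_{G/H_2}$ has a spectral gap and $1_{H_2}\not\prec\sigma$, then $1_G\not\prec \Ind_{H_2}^G\sigma$) together with induction in stages $\lambda_{G/H_1}=\Ind_{H_2}^G\lambda_{H_2/H_1}$; the cocompact links in the chain are supplied by Wang's theorems ($N_G(H^0)$ cocompact in $G$; $HCR/H$ compact) and the terminal case by Bekka's Lemma~3 for lattices in centre-free semisimple groups without compact factors. Measured against this, your outline has two further gaps. First, you invoke the semisimple case for the arbitrary \emph{closed} finite-covolume subgroup $\overline{HW}/W$ of $S$, but the justifications you cite (Property (T) in higher rank, rank-one spectral theory, and indeed Bekka's lemma) cover lattices; reducing closed finite-covolume subgroups to lattices is itself a nontrivial step, done in the paper via Wang's normalizer theorem (its third step). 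Second, you skip the reduction to second countable $G$, which the induced-representation machinery requires (the paper's first step, using $\sigma$-compactness of $G/H$). Your connectedness reduction also overlooks that $G/H$ splits into several $G^0$-orbits and that $L^2_0(G/H)$ contains a finite-dimensional $G$-invariant space of $G^0$-fixed vectors; this is repairable, but the Mautner step, as formulated, is not --- it must be replaced by (or supplemented with) the Wang cocompactness input, at which point one is essentially running the paper's argument.
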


It is a  standard fact that $L^2(G/H)$ has a spectral gap
when $H$ is cocompact in $G$ 
(see \cite[Chapter III,  Corollary 1.10]{Margulis}).
When   $G$ is a semisimple Lie group, Theorem~\ref{Theo1}
is an easy consequence  of Lemma~3
in \cite{Bekka}. 
Our proof  is by reduction to these two cases.
The crucial tool for this reduction
is Proposition (1.11) from Chapter III in \cite{Margulis}
(see Proposition~\ref{Margulis} below).
From Theorem~\ref{Theo1} and again from this proposition,  we  obtain the following corollary.
\begin{corollary}
\label{Cor1} Let $G$ be a second countable Lie group, 
$H$ a closed subgroup  with finite covolume in $G$
and $\sigma$ a unitary representation of $H.$ 
Let $\pi={\rm Ind}_H^G \sigma$ be the representation of $G$  induced from $\sigma.$
If $1_H$ is not weakly contained in $\sigma,$ then 
$1_G$ is not weakly contained in $\pi.$
\end{corollary}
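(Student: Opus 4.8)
The plan is to deduce Corollary~\ref{Cor1} from Theorem~\ref{Theo1} using the key feature of induced representations: that weak containment of the trivial representation in an induction can be detected on the quasi-regular representation $\lambda_{G/H}$. The cleanest route I would take exploits continuity of induction with respect to weak containment. Suppose for contradiction that $1_G\prec \pi=\Ind_H^G\sigma$. I would first replace the hypothesis ``$1_H\not\prec\sigma$'' by its contrapositive form and aim to manufacture a statement about the quasi-regular representation.

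First I would recall the classical relation $\Ind_H^G 1_H\cong \lambda_{G/H}$, so that Theorem~\ref{Theo1} says precisely that $1_G\not\prec \lambda_{G/H}^0$; equivalently, in the finite-covolume case, the only way $1_G\prec \lambda_{G/H}$ can occur is through the genuine invariant vector $1_{G/H}$. The heart of the argument is then the continuity property of induction: if $1_H\not\prec\sigma$ were false to fail, i.e. assuming $1_H\not\prec\sigma$, I would want to rule out $1_G\prec\Ind_H^G\sigma$. The natural tool here is the fact (this is precisely Proposition~(1.11) of Chapter III in \cite{Margulis}, cited as Proposition~\ref{Margulis} below) that weak containment is preserved by induction in the sense that $\sigma_1\prec\sigma_2$ implies $\Ind_H^G\sigma_1\prec\Ind_H^G\sigma_2$, together with the dual/complementary statement controlling when $1_G$ can be weakly contained in an induced representation.

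The concrete mechanism I would use is this. If $1_H\not\prec\sigma$, then by a spectral-gap argument at the level of $H$ there is a compact set $Q_0\subset H$ and $\eps_0>0$ bounding $\sigma$ away from invariant vectors uniformly. I would transport this to $G$ by analyzing the structure of $\Ind_H^G\sigma$ realized on $L^2$-sections of the associated bundle over $G/H$, and compare it with $\lambda_{G/H}^0$: almost-invariant vectors for $\pi$ would, after projecting the fibrewise inner products, produce almost-invariant vectors for $\lambda_{G/H}$ that are orthogonal to the constants, contradicting Theorem~\ref{Theo1}. Concretely I expect to form, from a sequence of $(Q,\eps_n)$-almost-invariant unit vectors $\xi_n$ for $\pi$, the scalar functions $x\mapsto \norm{\xi_n(x)}^2$ on $G/H$ and show these are almost invariant in $L^2(G/H)$ while staying away from the constants because $\sigma$ has no almost invariant vectors in its fibres.

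The main obstacle I anticipate is the last comparison step: showing that the fibrewise norms of almost-invariant sections of the induced bundle are \emph{not} asymptotically constant on $G/H$. This is where the hypothesis $1_H\not\prec\sigma$ must be used quantitatively, and it is delicate because the induced representation mixes the base dynamics on $G/H$ with the fibre action of $\sigma$; one must prevent the fibre contribution from being ``washed out'' by concentration of $\xi_n$ near a single point of $G/H$. I expect that the cited Proposition~\ref{Margulis}, which presumably packages exactly this continuity-of-induction principle together with a converse detecting $1_G\prec\Ind_H^G\sigma$ in terms of $1_H\prec\sigma$ modulo the quasi-regular representation, is designed to handle this difficulty, so the real work of the proof is to invoke it correctly and combine it with Theorem~\ref{Theo1}; the second-countability hypothesis is presumably needed to ensure the induced representation and the relevant direct-integral decompositions are well-behaved.
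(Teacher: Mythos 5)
Your proposal is correct and, in its operative form, is exactly the paper's proof: Theorem~\ref{Theo1} gives the spectral gap for $\lambda_{G/H}$ since $H$ has finite covolume in the Lie group $G$, and Proposition~\ref{Margulis} applied verbatim with this spectral gap and the hypothesis $1_H\not\prec\sigma$ immediately yields $1_G\not\prec\Ind_H^G\sigma$, with no further work needed. One caution about your middle paragraphs: Proposition~\ref{Margulis} is \emph{not} ``continuity of induction'' (that principle, $\sigma_1\prec\sigma_2\Rightarrow\Ind_H^G\sigma_1\prec\Ind_H^G\sigma_2$, gives only the easy converse the paper notes after the corollary), and your sketched fibrewise mechanism inverts the roles: the spectral gap forces the norm functions $x\mapsto\Vert\xi_n(x)\Vert$ to be asymptotically \emph{close to} the constants, and the contradiction with $1_H\not\prec\sigma$ is then extracted inside the fibres, rather than by showing those functions stay away from the constants (which $1_H\not\prec\sigma$ cannot do, since a unit section may have exactly constant pointwise norm).
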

Observe that, by continuity of induction,  the converse result is also true: 
if $1_H\prec\sigma,$ 
then $1_G\prec\pi.$ 

From the previous corollary we deduce a spectral gap  result
for some subgroups  of $G$ with infinite covolume.

Recall that a subgroup $H$ of a topological group $G$ is called \emph{co-amenable}
 in $G$ if there is a $G$-invariant mean on the space $C^b(G/H)$
of bounded continuous functions on $G/H$. When $G$ is locally compact,
 this is equivalent to $1_G\prec  \lambda_{G/H}$; this property has been
extensively studied by Eymard \cite{Eymard} for which he refers to as
the amenability of  the homogeneous space $G/H.$
Observe that a normal subgroup $H$ in $G$ is co-amenable in $G$
if and only if the quotient group $G/H$ is amenable.

\begin{corollary}
\label{Cor2} Let $G$ be a second countable Lie group and  
$H$ a closed subgroup  with finite covolume in $G.$
Let $L$ be a closed subgroup of $H$. Assume that $L$ is
not co-amenable in $H.$
Then $\lambda_{G/L}$ has a spectral gap.
\end{corollary}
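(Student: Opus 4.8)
The plan is to reduce the statement to Corollary~\ref{Cor1} by expressing $\lambda_{G/L}$ as a representation of $G$ induced from $H$. First I would identify the quasi-regular representation $\lambda_{G/L}=\Ind_L^G 1_L$ and apply induction in stages through the intermediate closed subgroup $L\subseteq H\subseteq G$ to obtain
$$
\lambda_{G/L}=\Ind_L^G 1_L=\Ind_H^G\left(\Ind_L^H 1_L\right)=\Ind_H^G\lambda_{H/L}.
$$
Here $H$ is a closed subgroup of the Lie group $G$, hence itself a second countable locally compact group, so that $\lambda_{H/L}=\Ind_L^H 1_L$ is a genuine unitary representation of $H$ and the Mackey induction-in-stages identity applies.

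Next I would rewrite the hypothesis on $L$ in terms of weak containment. Since $H$ is locally compact, the assumption that $L$ is \emph{not} co-amenable in $H$ is, by the characterization recalled above, exactly the statement $1_H\not\prec\lambda_{H/L}$. Setting $\sigma=\lambda_{H/L}$, the hypotheses of Corollary~\ref{Cor1} are now all in place: $G$ is a second countable Lie group, $H$ is closed of finite covolume, and $1_H\not\prec\sigma$. The corollary then delivers
$$
1_G\not\prec\Ind_H^G\sigma=\lambda_{G/L}.
$$

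It remains to pass from this non-containment to the spectral gap property. Because $\lambda_{G/L}^0$ is a subrepresentation of $\lambda_{G/L}$, the relation $1_G\not\prec\lambda_{G/L}$ immediately forces $1_G\not\prec\lambda_{G/L}^0$, which is precisely the assertion that $\lambda_{G/L}$ has a spectral gap. One also sees \emph{a posteriori} that $L$ must have infinite covolume in $G$: otherwise the constants would span a $G$-invariant line in $L^2(G/L)$, making $1_G$ an honest subrepresentation and contradicting $1_G\not\prec\lambda_{G/L}$; in this case $\lambda_{G/L}^0=\lambda_{G/L}$ in any event. The only point requiring real care is the induction-in-stages identity in the generality needed here, since $G/L$ carries in general only a quasi-invariant measure and one must keep track of the associated Radon--Nikodym cocycle; this is, however, a standard part of Mackey's theory for second countable groups, and it is exactly the formalism into which Corollary~\ref{Cor1} is designed to feed. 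Everything else is a formal manipulation of weak-containment relations.
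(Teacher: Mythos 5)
Your proof is correct and takes essentially the same route as the paper, which deduces Corollary~\ref{Cor2} from Corollary~\ref{Cor1} in one line via the identification $\lambda_{G/L}=\Ind_H^G\lambda_{H/L}$ (induction in stages applied to $\lambda_{G/L}=\Ind_L^G 1_L$) together with the characterization of non-co-amenability of $L$ in $H$ as $1_H\not\prec\lambda_{H/L}$. Your added remarks --- that $1_G\not\prec\lambda_{G/L}$ implies $1_G\not\prec\lambda_{G/L}^0$, that the hypotheses force $L$ to have infinite covolume, and that the induction-in-stages identity in the quasi-invariant-measure setting is the one point needing Mackey's formalism --- are sound elaborations of details the paper leaves implicit.
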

Corollary~\ref{Cor2} is a direct consequence of Corollary~\ref{Cor1},
since the representation $\lambda_{G/L}$ on
$L^2(H/L)$ is equivalent to the induced representation ${\rm Ind}_H^G \lambda_{H/L}$.

Here is a reformulation of the  previous corollary.
Let $G$ be a  Lie group and  
$H$ a closed subgroup  with finite covolume in $G.$
If a subgroup $L$ of $H$ is co-amenable in $G$, then 
 $L$ is co-amenable in $H.$
Observe that the converse (if $L$ is  co-amenable in $H$, then 
 $L$ is co-amenable in $G$) is true
for  any topological group $G$ and any
closed subgroup $H$ which is co-amenable in $G$
(see \cite[p.16]{Eymard}).

Using  methods  from 
 \cite{Leuz} (see also \cite{Brooks2}),
we obtain  the following consequence for the spectral geometry
of infinite coverings of locally symmetric
Riemannian spaces of finite volume.
Recall that a lattice in the locally compact
group $G$ is a discrete subgroup of $G$ with  finite covolume.

\begin{corollary}
\label{Cor3} Let $G$ be a semisimple Lie group
with finite centre and maximal compact subgroup $K$ and
let $\Gamma$  be a torsion-free lattice
$G.$  Let $\widetilde V$ be a covering of
the  locally symmetric space $V = K\backslash G/\Gamma$.
Assume that the  fundamental group 
of $\widetilde V$ is not co-amenable in $\Gamma.$
\begin{itemize}
 \item[(i)] We have $h(\widetilde V)>0$ for the Cheeger constant
$h(\widetilde V)$ of $\widetilde V.$ 
\item[(ii)]  We have $\lambda_0 (\widetilde V)>0,$ where  $\lambda_0 (\widetilde V)$
is the bottom of the $L^2$ -spectrum of the Laplace--
Beltrami operator on $\widetilde V.$ 
\end{itemize}
\end{corollary}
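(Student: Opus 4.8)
The plan is to prove Corollary~\ref{Cor3} by translating the two spectral-geometric quantities into the representation-theoretic spectral gap already established, and then invoking Corollary~\ref{Cor2}. I would set up the dictionary as follows. The locally symmetric space $V = K\backslash G/\Gamma$ has universal-type cover $K\backslash G$ (the associated symmetric space $X$), and a covering $\widetilde V$ of $V$ corresponds to an intermediate subgroup: writing $L$ for the fundamental group of $\widetilde V$ viewed as a subgroup of $\Gamma$, we have $\widetilde V = K\backslash G / L$. Since $\Gamma$ is torsion-free and $K$ is the maximal compact, $\widetilde V$ is a smooth Riemannian manifold and the natural $G$-action makes the relevant analysis $G$-equivariant.

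The key step is to connect $h(\widetilde V)$ and $\lambda_0(\widetilde V)$ to the representation $\lambda_{G/L}$. First I would note that $\lambda_0(\widetilde V)$, the bottom of the $L^2$-spectrum of the Laplace--Beltrami operator, is bounded below by a positive constant precisely when the Laplacian on $\widetilde V$ has no almost-invariant functions in the appropriate sense. The bridge here is the standard fact (this is exactly what is provided by the methods of \cite{Leuz} and \cite{Brooks2}) that $\lambda_0(\widetilde V)>0$ is equivalent to the statement that the trivial representation $1_G$ is \emph{not} weakly contained in $\lambda_{G/L}$ on $L^2(G/L)$; intuitively, almost-invariant vectors for the $G$-representation produce test functions that witness $\lambda_0 = 0$, and conversely. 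For the Cheeger constant $h(\widetilde V)$, I would invoke the Cheeger inequality $\lambda_0(\widetilde V) \geq h(\widetilde V)^2/4$ together with Brooks-type comparison results, so that positivity of $\lambda_0$ yields positivity of $h$; alternatively one relates $h(\widetilde V)>0$ directly to the failure of co-amenability via a discretization/combinatorial argument, but going through $\lambda_0$ is cleaner.

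With this dictionary in hand, the logical core is immediate: the hypothesis is that the fundamental group $L$ of $\widetilde V$ is not co-amenable in $\Gamma$, and $\Gamma$ is a lattice, hence a closed subgroup of finite covolume in $G$. This is exactly the hypothesis of Corollary~\ref{Cor2} with $H = \Gamma$. Therefore Corollary~\ref{Cor2} gives that $\lambda_{G/L}$ has a spectral gap, i.e. $\lambda_{G/L}^0$ has no almost invariant vectors, which (in the infinite-covolume case where $\lambda_{G/L}^0 = \lambda_{G/L}$) says precisely that $1_G \not\prec \lambda_{G/L}$. Feeding this back through the dictionary yields $\lambda_0(\widetilde V)>0$, establishing (ii), and then Cheeger's inequality gives (i).

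The main obstacle I anticipate is not the representation theory—that is handed to us by Corollary~\ref{Cor2}—but rather making the analytic translation rigorous and uniform. Specifically, one must be careful that the Laplace--Beltrami operator on $\widetilde V$ and the Casimir/representation-theoretic Laplacian on $L^2(G/L)$ have matching spectral bottoms, which requires controlling the passage from $L$-invariant functions on $G$ to functions on the quotient $K\backslash G/L$ and keeping track of the $K$-invariance; this is where the cited work of \cite{Leuz} and \cite{Brooks2} does the heavy lifting, and I would cite it rather than reprove the equivalence. A secondary subtlety is ensuring the torsion-freeness of $\Gamma$ (and hence smoothness of $\widetilde V$) is genuinely used to make $\widetilde V$ a manifold so that $\lambda_0$ and $h$ are well-defined in the first place.
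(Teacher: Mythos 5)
Your high-level reduction matches the paper's: let $L\le\Gamma$ be the fundamental group of $\widetilde V$, apply Corollary~\ref{Cor2} with $H=\Gamma$ to get a spectral gap for $\lambda_{G/L}$, and reduce from $K\backslash G/L$ to $G/L$ (the paper cites Section~4 of \cite{Leuz} for exactly this step). But there is a genuine gap in your derivation of (i). You propose to prove (ii) first and then deduce $h(\widetilde V)>0$ from $\lambda_0(\widetilde V)>0$ via Cheeger's inequality. Cheeger's inequality $\frac{1}{4}h(\widetilde V)^2\le\lambda_0(\widetilde V)$ goes the wrong way for that deduction: it yields $h>0\Rightarrow\lambda_0>0$ and says nothing when $\lambda_0>0$. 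The converse implication requires a Buser-type inequality $\lambda_0\le c_1 h+c_2h^2$, which needs a lower Ricci curvature bound and which you never actually invoke; the vague appeal to ``Brooks-type comparison results'' does not supply it, and the named tool (Cheeger) cannot. The paper runs the logic in the opposite order precisely to avoid this: it proves the isoperimetric statement (i) \emph{directly} from the spectral gap, and then (ii) follows from (i) by Cheeger's inequality used in its correct direction.

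Concretely, the paper's argument for (i) is: the spectral gap furnishes a compact neighbourhood $H$ of $e$ and $\eps>0$ with $\eps\Vert\xi\Vert\le\sup_{h\in H}\Vert\lambda_{G/\La}(h)\xi-\xi\Vert$ for all $\xi\in L^2(G/\La)$; given a test domain $\Omega$ with compact closure and smooth boundary, Proposition~1 of \cite{Leuz} produces an auxiliary domain $\widetilde\Omega$ such that the tubular neighbourhood satisfies $V(U_{|h|}(\partial\Omega))\le C\,V(\widetilde\Omega)\,A(\partial\Omega)/V(\Omega)$ for all $h\in H$; applying the gap inequality to the characteristic function $\chi_{\widetilde\Omega}$ gives some $h\in H$ with $\eps^2V(\widetilde\Omega)\le V(X)$, where $X$ (the set on which $\chi_{\widetilde\Omega}$ and its translate differ) is contained in $U_{|h|}(\partial\Omega)$, whence $A(\partial\Omega)/V(\Omega)\ge\eps^2/C$ uniformly in $\Omega$. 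If you want to keep your order (ii)-then-(i), you must either explicitly add Buser's inequality---which is in fact available here, since $G/\La$ with the metric induced from a right-invariant metric on $G$ has Ricci curvature bounded below---or reproduce an isoperimetric argument of the above kind. A smaller point: of your claimed equivalence between $\lambda_0(\widetilde V)>0$ and $1_G\not\prec\lambda_{G/L}$, only the direction ``spectral gap $\Rightarrow\lambda_0>0$'' is needed, and that is the easy direction (a function with small Rayleigh quotient is almost invariant under a compact neighbourhood of $e$), so your (ii) is essentially sound; it is (i) that remains unproven as written.
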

There is in general no uniform bound for $h(\widetilde V)$
or $\lambda_0 (\widetilde V)$ for all coverings $\widetilde V$. 
However, it was shown in \cite{Leuz} that,
when $G$ has Kazhdan's Property (T), such a
bound exists for \emph{every} locally symmetric space $V = K\backslash G/\Gamma$.
Observe also that if, in the previous corollary,
the fundamental group 
of $\widetilde V$ is co-amenable in $\Gamma$
and has infinite covolume, 
then $h(\widetilde V)=\lambda_0 (\widetilde V)=0,$
as shown in \cite{Brooks1}.

%In the case where $G$ is solvable, then, by a result of Mostow,
%$H$ is cocompact in $G$ (see \cite[Chapter III, Theorem 3.1]{Raghu}).  

\section{Proofs of Theorem~\ref{Theo1} and Corollary \ref{Cor3}}

The following result of Margulis (Proposition (1.11) in Chapter III of \cite{Margulis})
wil be crucial.
\begin{proposition} \textbf{(\cite{Margulis})}
\label{Margulis}
Let $G$ be a second countable locally compact group,
$H$ a closed subgroup of $G,$ and
$\sigma$ a unitary representation of $H.$ 
Assume that $\lambda_{G/H}$ has a spectral gap and that
$1_H$ is not weakly contained in $\sigma.$ Then 
$1_G$ is not weakly contained in 
${\rm Ind}_H^G \sigma$.
 \end{proposition}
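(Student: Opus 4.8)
The plan is to prove the contrapositive: assuming that $1_G$ is weakly contained in $\pi:=\Ind_H^G\sigma$ while $\lambda_{G/H}$ has a spectral gap, I would deduce that $1_H\prec\sigma$. Realize $\pi$ on the Hilbert space $\H_\pi$ of (classes of) measurable maps $f\colon G\to\K$ satisfying the covariance $f(gh)=\sigma(h)^{-1}f(g)$ for $h\in H$ and $\int_{G/H}\norm{f(g)}^2\,d\mu(g)<\infty$, with $G$ acting by left translation $(\pi(x)f)(g)=f(x^{-1}g)$ (unitary for an invariant measure on $G/H$; in general one inserts the usual Radon--Nikodym cocycle, which does not affect the argument). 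First I would fix a compact $Q\subseteq G$ and $\eps>0$ and choose a unit vector $f\in\H_\pi$ with $\norm{\pi(x)f-f}<\eps$ for all $x\in Q$. The elementary bound $\bigl|\,\norm{f(x^{-1}g)}-\norm{f(g)}\,\bigr|\le\norm{f(x^{-1}g)-f(g)}$ shows that $\sqrt\phi\in L^2(G/H)$, where $\phi(g):=\norm{f(g)}^2$, is a unit vector almost invariant under $\lambda_{G/H}$; since $\sqrt\phi\ge 0$ it is close to the constants, so the \emph{spectral gap hypothesis} forces $\phi$ to be close to its mean, i.e. the mass of $f$ is essentially equidistributed over $G/H$.

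The decisive feature is that the right translations $(R_hf)(g):=f(gh)$, $h\in H$, act on $\H_\pi$ and commute with $\pi$; by the covariance relation $R_h$ is nothing but the fibrewise operator $f(g)\mapsto\sigma(h)^{-1}f(g)$, so
$$\norm{R_hf-f}^2=\int_{G/H}\norm{\sigma(h)^{-1}f(g)-f(g)}^2\,d\mu(g).$$
Now I would use the negation of $1_H\prec\sigma$ in quantitative form: there exist a compact $Q_H\subseteq H$ and $\kappa>0$ such that $\int_{Q_H}\norm{\sigma(h)v-v}^2\,dh\ge\kappa$ for every unit vector $v\in\K$. Applying this fibrewise and integrating (Fubini) over $G/H$ yields the lower bound
$$\int_{Q_H}\norm{R_hf-f}^2\,dh\ \ge\ \kappa\int_{G/H}\norm{f(g)}^2\,d\mu(g)\ =\ \kappa,$$
valid for \emph{every} almost invariant unit vector $f$, independently of $Q$ and $\eps$.

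The goal is then to contradict this by an upper bound tending to $0$. The key observation is that each vector $R_hf-f$ is itself almost invariant under left translation: since $\pi(x)$ and $R_h$ commute, $\norm{\pi(x)(R_hf-f)-(R_hf-f)}\le 2\norm{\pi(x)f-f}<2\eps$ for $x\in Q$. Moreover $1_H\not\prec\sigma$ implies that $\pi$ has no nonzero $G$-invariant vectors (such a vector would be a constant $\sigma$-fixed element of $\K$, hence $0$), so each $R_hf-f$ is a genuine almost invariant vector orthogonal to the invariants. If one knew that $\pi$ itself had a spectral gap these vectors would be forced to be small, contradicting the lower bound; but that is essentially the assertion being proved. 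The main obstacle is therefore exactly this transfer step: to show, using only the spectral gap of the scalar representation $\lambda_{G/H}$ together with the equidistribution of $\phi$ established above, that an almost left-invariant section of the twisted bundle must be almost invariant under the commuting right $H$-action. This is transparent when $H$ is cocompact, since a compact fundamental domain lets one express right $Q_H$-translations through left translations over a fixed compact set, and the role of the spectral gap is precisely to replace compactness of the fundamental domain in the finite-covolume setting. I expect this comparison between the left-$G$ structure of $\Ind_H^G\sigma$ and that of $\lambda_{G/H}$ to be the technical heart. An alternative route would tensor, using $\pi\otimes\overline\pi\cong\Ind_H^G(\sigma\otimes\overline\sigma)$ and Fell's continuity of induction, and isolate the trivial $H$-isotypic component; but there the analogous difficulty reappears, because for infinite-dimensional $\sigma$ there need be no invariant Hilbert--Schmidt operators in the fibre.
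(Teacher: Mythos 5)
You have correctly identified the soft parts of the argument and assembled them properly: the norm map $f\mapsto\norm{f(\cdot)}$ produces almost invariant vectors for $\lambda_{G/H}$ (which, note, already finishes the infinite-covolume case, where the spectral gap hypothesis concerns all of $L^2(G/H)$), and your Fubini lower bound $\int_{Q_H}\norm{R_hf-f}^2\,dh\geq\kappa$ for every unit vector $f$ is correct (modulo the standard, fixable point that the negation of $1_H\prec\sigma$ is a priori a sup-bound over $Q_H$, not an integrated one). But the proof is incomplete, and you say so yourself: the contradiction requires the upper bound $\int_{Q_H}\norm{R_hf_n-f_n}^2\,dh\to 0$ along almost invariant unit vectors $f_n$, and this is never established. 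Since your lower bound holds for \emph{every} unit vector, that upper bound is — given everything else you prove — exactly equivalent to the proposition itself; you have reformulated the statement, not proved it, and your own remark that assuming a spectral gap for $\pi$ ``is essentially the assertion being proved'' concedes the circularity. For what it is worth, the paper does not prove this proposition either: it is quoted verbatim from Margulis (Chapter III, Proposition 1.11 of his book), so there is no in-paper argument to compare against, and the burden of the transfer step is real.

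The idea you are missing is a localization that restores compactness without cocompactness of $H$. Your approach is global: implementing the right $H$-action through left translations requires, in the cocycle realization of $\pi$ on $L^2(G/H,\K)$ via a Borel section $s:G/H\to G$, cocycle values $\alpha(x,y)\in H$ that leave every compact subset of $H$ as $y$ ranges over $G/H$ — the spectral gap of the scalar representation does not directly repair this. Margulis's argument instead fixes a base point: for $y_0\in G/H$ and $h$ in a fixed compact $K\subset H$, the elements $s(y_0)hs(y_0)^{-1}$ range over a \emph{fixed compact subset of} $G$, so almost invariance of $f_n$ under this compact set, combined with the near-constancy of $\norm{f_n(\cdot)}$ that you derived from the spectral gap (which guarantees the fibre mass near $y_0$ is bounded below), shows that a suitable local average such as $v_n=\int_B f_n(y)\,dy$ over a small ball $B$ around $y_0$ is a nonzero vector with $\norm{\sigma(h)v_n-v_n}/\norm{v_n}$ small uniformly for $h\in K$, whence $1_H\prec\sigma$. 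In other words, the spectral gap is not used to make right translation almost trivial globally (your transfer step), but only to ensure that the section does not vanish near the chosen base point, where the conjugation trick does the rest; this is also why the cocompact case looks transparent to you — there the compactness is available globally, but it is needed only locally.
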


In order to reduce the proof of Theorem~\ref{Theo1}
to the semisimple case,  we will use several times the following
proposition.
\begin{proposition}
\label{Pro1} 
Let $G$ a separable locally compact  group and 
 $H_1$ and $H_2$ be  closed subgroups of $G$
with $H_1\subset H_2$ and such that   
$G/H_2$ and $H_2/H_1$  have  $G$-invariant and $H_2$-invariant
regular Borel measures, respectively.
Assume that the $H_2$-representation
$\lambda_{H_2/H_1}$
 on $L^2(H_2/H_1)$ and that the $G$-representation $\lambda_{G/H_2}$
on $L^2(G/H_2)$ have both spectral gaps. Then the $G$-representation $\lambda_{G/H_1}$
on $L^2(G/H_1)$ has a spectral gap.
\end{proposition}
\begin{proof}
Recall that, for any closed subgroup $H$ of $G,$ the representation
$\lambda_{G/H}$ is equivalent to
the representation ${\rm Ind}_{H}^G 1_H$ induced by the identity representation
$1_H$ of $H$.
Hence, we have, by transitivity of induction,
$$
\lambda_{G/H_1}={\rm Ind}_{H_1}^G 1_{H_1}= {\rm Ind}_{H_2}^G({\rm Ind}_{H_1}^{H_2}1_{H_1})
= {\rm Ind}_{H_2}^G\lambda_{H_2/H_1}.
$$
 We have to consider three cases:

\medskip
\noindent
$\bullet$ {\it First case:} $H_1$ has finite covolume in $G,$
that is, $H_1$ has finite covolume in $H_2$ and $H_2$ has finite covolume in $G.$
Then 
$$\lambda_{G/H_1}^0=\lambda_{G/H_2}^0 \oplus {\rm Ind}_{H_2}^G \lambda_{H_2/H_1}^0.$$
By assumption,  $\lambda_{H_2/H_1}^0$ and $\lambda_{G/H_2}^0$ do not weakly contain
$1_{H_2}$ and $1_G$, respectively.
It follows from Proposition~\ref{Margulis}
that ${\rm Ind}_{H_2}^G \lambda_{H_2/H_1}^0$ does not weakly contain $1_G$.
Hence, $\lambda_{G/H_1}^0$ does not weakly contain $1_G$.

\medskip
\noindent
$\bullet$ {\it Second case:} $H_1$ has finite covolume in $H_2$ and $H_2$ has 
infinite covolume in $G.$ Then
$$\lambda_{G/H_1}=\lambda_{G/H_2}\oplus {\rm Ind}_{H_2}^G \lambda_{H_2/H_1}^0.$$
By assumption, $\lambda_{H_2/H_1}^0$ and $\lambda_{G/H_2}$
do not weakly contain $1_{H_2}$ and $1_{G}$.
As above, using Proposition~\ref{Margulis}, 
we see that  $\lambda_{G/H_1}$ does not weakly contain $1_G$.

\medskip
\noindent
$\bullet$ {\it Third case:} $H_1$ has infinite covolume in $H_2$. 
By assumption,
  $\lambda_{H_2/H_1}$ does not weakly contain
$1_{H_2}.$ By Proposition~\ref{Margulis} again, it follows that
 $\lambda_{G/H_1}={\rm Ind}_{H_2}^G\lambda_{H_2/H_1}$
does not weakly contain
$1_{G}.$ $\bsq$

 \end{proof}

For the reduction of the proof of Theorem~\ref{Theo1}
to the case where $G$ is second countable, we will need
the following lemma.
\begin{lemma}
\label{Pro2} 
Let $G$ be a locally compact group and $H$ a closed subgroup
with finite covolume. The homogeneous space $G/H$ is $\sigma$-compact. 
\end{lemma}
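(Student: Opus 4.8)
The plan is to exploit two facts: that $G$, being locally compact, contains an open $\sigma$-compact subgroup, and that the measure $\mu$ witnessing finite covolume is a \emph{nonzero} finite $G$-invariant regular Borel measure on $G/H$. First I would fix a compact symmetric neighbourhood $V$ of the identity in $G$ and set $O=\bigcup_{n\ge 1}V^n$, which is an open, $\sigma$-compact subgroup of $G$. Writing $p\colon G\to G/H$ for the (open) quotient map and $x_0=g_0H$, the $O$-orbit $Ox_0=p(Og_0)$ is the image of the open, $\sigma$-compact set $Og_0$; hence each $O$-orbit is an open and $\sigma$-compact subset of $G/H$, and the $O$-orbits form a partition of $G/H$ into pairwise disjoint open sets.

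The heart of the argument is to show that every nonempty open subset of $G/H$ has positive $\mu$-measure, equivalently that the support of $\mu$ is all of $G/H$. To this end I would let $Z$ be the union of all open $\mu$-null subsets of $G/H$. Then $Z$ is open, and it is $G$-invariant since a translate of an open null set is again an open null set (here $\mu$ is $G$-invariant). By inner regularity $\mu(Z)=0$: any compact $K\subseteq Z$ is covered by finitely many open null sets and is therefore itself null, so $\mu(Z)=\sup\{\mu(K):K\subseteq Z\text{ compact}\}=0$. Since $G$ acts transitively on $G/H$ and $Z$ is $G$-invariant, either $Z=\emptyset$ or $Z=G/H$; the latter is impossible because $\mu(Z)=0<\mu(G/H)$. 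Hence $Z=\emptyset$, which is precisely the statement that every nonempty open set has positive $\mu$-measure.

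To conclude I would combine the two ingredients. The $O$-orbits are pairwise disjoint open sets, so by the previous paragraph each has strictly positive $\mu$-measure. Since $\mu$ is finite, for every $n\in\NN$ at most finitely many orbits can have measure at least $1/n$; therefore there are only countably many $O$-orbits. As each orbit is $\sigma$-compact and they cover $G/H$, the space $G/H$ is a countable union of $\sigma$-compact sets, hence $\sigma$-compact.

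The step I expect to require the most care is the assertion that the complement of the support of $\mu$ is $\mu$-null, since $Z$ may a priori be an uncountable union of open null sets; this is exactly where inner regularity of $\mu$ and the finiteness of subcovers of compact sets are essential. The remaining points—openness and $\sigma$-compactness of the $O$-orbits, and the countability of a disjoint family of positive-measure sets in a finite measure space—are routine. $\bsq$
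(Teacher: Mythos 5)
Your proof is correct, but it follows a genuinely different route from the paper's. The paper argues directly with the measure: by regularity it picks an increasing sequence of compact sets $K_n\subseteq G/H$ with $\mu(K_n)\to 1$, observes that $K=\bigcup_n K_n$ has full measure and is \emph{therefore dense}, and then thickens by a compact neighbourhood $U$ of $e$ to get $G/H=UK=\bigcup_n UK_n$, a countable union of compact sets. You instead partition $G/H$ into orbits of an open $\sigma$-compact subgroup $O=\bigcup_n V^n$ and count: each orbit is open and $\sigma$-compact, full support of $\mu$ forces each orbit to have positive measure, and finiteness of $\mu$ caps the number of orbits at countably many. Notably, both proofs rest on the same key measure-theoretic fact---that every nonempty open subset of $G/H$ has positive $\mu$-measure---but the paper leaves it implicit in the word ``therefore'' (density of a full-measure set is exactly this fact, and its justification is the same translation-plus-inner-regularity argument you spell out for your set $Z$), whereas you isolate it and prove it carefully; your remark that inner regularity is what handles the possibly uncountable union of null open sets is exactly the right point of caution. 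As for what each approach buys: the paper's thickening trick is shorter, needing only one compact neighbourhood and no counting; your orbit decomposition is longer but more self-contained, and it gives the marginally stronger structural conclusion that $G/H$ is a countable disjoint union of open $\sigma$-compact subsets, i.e.\ that the double coset space $O\backslash G/H$ is countable. All the routine steps you defer (openness of $p$, compactness of $V^n$, countability of a disjoint family of positive-measure sets in a finite measure space) are indeed routine, so there is no gap.
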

\begin{proof}
 Let $\mu$ be the $G$-invariant regular probability measure
on the Borel subsets of $G/H.$
Choose an increasing  sequence of compact subsets $K_n$ of $G/H$  with
$\lim_n\mu(K_n)=1.$ The set $K=\bigcup_n K_n$ has $\mu$-measure $1$ and is 
therefore dense in $G/H.$ Let $U$ be a compact neighbourhood
of $e$ in $G.$ Then $UK=G/H$ and $UK=\bigcup_n UK_n$ is $\sigma$-compact.$\bsq$
\end{proof}

\goodbreak
\bigskip
\noindent 
\textbf{Proof of Theorem~\ref{Theo1} }

Through several steps the proof will be reduced to the case where $H$ is a lattice 
  in $G$  and where
 $G$ is a connected semisimple Lie group with trivial centre and without compact factors.

\medskip
\noindent
$\bullet$ {\it First step:} we can assume that $G$ is $\sigma$-compact
and hence second-countable. Indeed, let $p:G\to G/H$ be the
canonical projection. Since every compact subset
of $G/H$ is the image under $p$ of some
compact subset of $G$ (see \cite[Lemma~B.1.1]{BHV}),
it follows from Lemma~\ref{Pro2} that there exists
a $\sigma$-compact subset $K$ of $G$ such that 
$p(K)=G/H.$ Let $L$ be the subgroup of $G$ generated
by $K\cup U$ for a neighbourhood $U$ of $e$ in $G.$
Then $L$ is a $\sigma$-compact open subgroup
of $G.$  We show that $L\cap H$ has a finite covolume in $L$ and that 
$\lambda_{G/H}$ has a spectral gap if $\lambda_{L/L\cap H}$
has a spectral gap.

Since $LH$ is open
 in $G,$ the homogeneous space $L/L\cap H$ can be identified 
as  $L$-space with $L H/H$. Therefore $L\cap H$ 
has finite covolume in $L$.
On other hand, the restriction of $\lambda_{G/H}$ to $L$ is 
equivalent to the $L$-representation $\lambda_{L/L\cap H}$,
since $LH/H=p(L)=G/H.$  Hence, if  $\lambda_{L/L\cap H}$
has a spectral gap,
then  $\lambda_{G/H}$ has a spectral gap.

\medskip
\noindent
$\bullet$ {\it Second step:} we can assume that $G$ is connected. Indeed,
let $G^0$ be the connected component of the identity of $G.$ 
We show that $G^0\cap H$ has a finite covolume in $G^0$ and that 
$\lambda_{G/H}$ has a spectral gap if $\lambda_{G^0/G^0\cap H}$
has a spectral gap.

The subgroup $G^0 H$ is
open  in $G$ and has finite covolume in $G$ as it contains
$H.$ It follows that $G^0 H$ has finite index in $G$ since $G/ G^0H$ is discrete.
Hence $\lambda_{G/G^0 H}$ has a spectral gap.

On the other hand, since $G^0H$ is closed in $G$,
the homogeneous space $G^0/G^0\cap H$ can be identified 
as a $G^0$-space with $G^0 H/H$. Therefore $G^0\cap H$ 
has finite covolume in $G^0$. 
The restriction of $\lambda_{G^0 H/H}$ to $G^0$ is 
equivalent to the $G^0$-representation $\lambda_{G^0/G^0\cap H}$.

Suppose now that  $\lambda_{G^0/G^0\cap H}$ has a spectral gap.
Then the $G^0H$-represent\-ation $\lambda_{G^0 H/H}$ has a spectral spectral,
since $L^2_0(G^0H/H)\cong L^2_0(G^0/G^0\cap H)$ as $G_0$-representations.
An application of Proposition~\ref{Pro1}  
with $H_1=H$ and $H_2=G^0 H$ shows  that  $\lambda_{G/ H}$ has a spectral gap.
Hence, we can assume that $G$ is connected.

\medskip
\noindent
$\bullet$ {\it Third step:} we can assume that $H$ is a lattice in $G.$
Indeed, let
$H^0$ be the connected component of the identity of $H$
and let $N_G(H^0)$ be the normalizer of $H^0$ in $G.$
Observe that $N_G(H^0)$ contains $H.$ 
By \cite[Theorem 3.8]{Wang2}, $N_G(H^0)$  is cocompact in 
$G.$  Hence, $\lambda_{G/ N_G(H^0)}$ has a spectral gap.
It follows from the previous proposition that $\lambda_{G/H}$
has a spectral gap if $\lambda_{N_G(H^0)/ H}$ has a spectral gap.

On the other hand, since $H^0$ is a normal subgroup of $H,$ we have
$$L^2_0(N_G(H^0)/H)\cong L^2_0((N_G(H^0)/H^0)/(H/H^0)),$$
as $N_G(H^0)$-representations.
Hence,   $\lambda_{N_G(H^0)/ H}$ has a spectral gap if and only if
 $\lambda_{\overline{N}/\overline{H}}$  has a spectral gap,
where $\overline{N}=N_G(H^0)/H^0$ and $\overline{H}=H/H^0.$

The second step applied to the Lie group $\overline{N}/\overline{H}$ shows that  $\lambda_{\overline{N}/\overline{H}}$
has a spectral gap if   
$\lambda_{\overline{N}^0/(\overline{N}^0\cap \overline{H})}$
has a spectral gap. 
Observe that $\overline{N}^0\cap\overline{H}$ is a lattice in the  connected Lie group
$\overline{N}^0,$ since $\overline{H}$ is discrete and since $H$ has finite covolume
in $N_G(H^0).$

This shows that we can assume that $H$ is a lattice in the connected Lie group $G.$

\medskip
\noindent
$\bullet$ {\it Fourth step:} we can assume that $G$ is a connected semisimple Lie group with no compact factors.
Indeed, let $G=SR$ be a Levi decomposition of $G$, with 
$R$  the solvable radical of $G$ and $S$ a semisimple
subgroup. Let $C$ be the maximal compact normal subgroup
of  $S.$ It is proved in \cite[Theorem~B, p.21]{Wang}
that $HCR$ is closed in $G$ and that $HCR/H$ is compact.
Hence, by the previous proposition,
 $\lambda_{G/H}$ has a spectral gap if  $\lambda_{G/HCR}$
has a spectral gap.

The quotient $\overline{G}=G/CR$ is a connected semisimple Lie group with no compact factors.
Moreover, $\overline {H}=HCR/CR$ is a lattice in $\overline{G}$ 
since $HCR/CR\cong H/ H\cap CR $  is discrete and since $HCR$ has finite covolume
in $G.$ Observe that $\lambda_{G/HCR}$ is equivalent 
to $\lambda_{\overline{G}/\overline {H}}$ as $G$-representation.

\medskip
\noindent
$\bullet$ {\it Fifth step:} we can assume that $G$ has trivial centre.
Indeed, let $Z$ be the centre of $G.$ It is known that $ ZH$ is discrete (and hence closed)
in $G$ (see \cite[Chapter V, Corollary 5.17]{Raghunathan}).
Hence, $ZH/H$ is finite and $\lambda_{ZH/H}$ has a spectal gap.

By the previous proposition,
 $\lambda_{G/H}$ has a spectral gap if  $\lambda_{G/ZH}$
has a spectral gap.
Now, $\overline{G}=G/Z$ is a connected semisimple Lie group with no compact factors
and with trivial centre, 
$\overline {H}=ZH/Z$ is a lattice in $\overline{G}$ 
and $\lambda_{G/ZH}$ is equivalent 
to $\lambda_{\overline{G}/\overline {H}}$.

\medskip
\noindent
$\bullet$ {\it Sixth step:} by the previous steps, we can assume that
$H$ is a lattice in a connected semisimple Lie group $G$ with no compact factors
and with trivial centre. In this case, the claim was proved in Lemma~3 of \cite{Bekka}.
This completes the proof of  Theorem~\ref{Theo1}.$\bsq$

\bigskip
\noindent 
\textbf{Proof of Corollary~\ref{Cor3}}

The proof is identical with the proof
of Theorems~3 and 4 in \cite{Leuz}; we give a  brief
outline of the arguments.  Let $\La$ be the fundamental group
of $\widetilde V$. First, 
it suffices to prove Claims (i) and (ii) for $G/\Gamma$
instead of $K\backslash G/\Gamma$ (see Section~4 in \cite{Leuz}).
 So we assume that $\widetilde V=  G/\La.$ 

Equip $G$ with a right invariant Riemannian metric
and $G/\La$ with the induced Riemannian metric.
Observe that $G/\La$ has infinite volume, since 
$\La$ is of infinite index in $\Gamma$.
Claim (ii) is a consequence of (i), by Cheeger's inequality
$\dfrac{1}{4} h(G/\La)^2\leq \lambda_0(G/\La).$
Recall that the Cheeger constant $h(G/\La)$ of $G/\La$
is the infimum over all numbers $A(\partial \Omega)/V(\Omega)$,
where $\Omega $ is an open submanifold of $G/\La$
with compact closure and smooth boundary $\partial \Omega,$
and where $V(\Omega)$ and $A(\partial \Omega)$ are  
the Lebesgue measures of $\Omega$ and $\partial \Omega$.

To prove Claim~(i), we proceed exactly as in  \cite{Leuz}.
By Corollary~\ref{Cor2}, there exists a compact neighbourhood
$H$ of the identity in $G$ and a constant $\eps>0$ such that  
$$
\leqno{(*)}\qquad\qquad \eps\Vert \xi\Vert\leq \sup_{h\in H}\Vert \lambda_{G/\La}(h)\xi-\xi\Vert,
\qquad\text{for all}\quad \xi \in L^2(G/\La).
$$

Let $\Omega $ be an open submanifold of $G/\La$
with compact closure and smooth boundary $\partial \Omega.$
By \cite[Proposition 1]{Leuz},
we can find an open subset $\widetilde\Omega$ of $G/\La$  
compact closure and smooth boundary such that, for all $h\in H$,
$$
\leqno{(**)}\qquad\qquad V(U_{|h|}( \partial \Omega))\leq C V(\widetilde\Omega) \dfrac{A(\partial \Omega)}{V(\Omega)},
$$ 
where the constant $C>0$ only depends on $H.$
Here, $|h|$ denotes the distance $d_G(e,g)$ of $h$ to the group unit
and, for a subset $S$ of $G/\La$,  $U_{r}(S)$ is the tubular neighbourhood
$$
U_{r}( \partial \Omega)=\{ x\in G/\La\,:\, d_{G/\La} (x, S)\leq r\}
$$
By Inequality $(*)$, applied 
the characteristic function $\chi_{\widetilde \Omega}$  of $\widetilde \Omega$,
there exists $h\in H$ such that
$$
\eps^2 V(\widetilde \Omega)\leq \Vert \lambda_{G/\La}(h)\chi_{\widetilde \Omega}-\chi_{\widetilde \Omega}\Vert^2= V(X),
$$
where
$$
X=\left\{ x\in G/\La\, :\, x\in \widetilde \Omega, hx\notin \partial \widetilde \Omega \right\} \bigcup
\left\{ x\in G/\La\, :\, x\notin \widetilde \Omega, hx\in \partial \widetilde \Omega  \right\}.
$$ 
One checks that $X\subset U_{|h|}( \partial \Omega).$ It follows from
Inequalities $(*)$ and $(**)$ that
$\dfrac{\eps^2}{C}\leq  \dfrac{A( \Omega)}{V(\Omega)}.$
Hence, $0<\dfrac{\eps^2}{C}\leq h(G/\La).$ $\bsq$

\noindent
{\bf Address}

\noindent
IRMAR, UMR 6625 du CNRS, Universit\'e de  Rennes 1, 
Campus Beaulieu, F-35042  Rennes Cedex, France

\noindent
E-mail : bachir.bekka@univ-rennes1.fr, yves.decornulier@univ-rennes1.fr


\begin{thebibliography}{BeMa00}




\bibitem[Bekk98]{Bekka} B. Bekka. On uniqueness of invariant means.
 \newblock\emph{Proc. Amer. Math. Soc.} {\bf 126}, 507-514 (1998).

\bibitem[BHV]{BHV} B. Bekka, P. de la Harpe, A. Valette. \newblock \emph{Kazhdan’s Property (T),}
Cambridge University Press 2008.


%\bibitem[CCJJV]{CCJJV} P-A. Cherix, M. Cowling, P. Jolissaint, P. Julg, A. Valette. 
%\newblock \emph{Groups with the Haagerup Property,} 
%\newblock Birkh\"auser, Progress in Mathematics 197, 2001.
                               

\bibitem[Broo81]{Brooks1} R. Brooks. The fundamental group and the spectrum of the Laplacian.
\emph{Comment. Math. Helv.} {\bf 56}, 581-598 (1981).

\bibitem[Broo86]{Brooks2} R. Brooks. The spectral geometry of a tower
of coverings.
\emph{J. Diff. Geometry} {\bf 23}, 97-107 (1986).

%\bibitem[Corn06]{Yves} Y. de Cornulier. Relative Kazhdan Property.
%\newblock\emph{ Ann. Sci. Ecole Norm. Sup.} {\bf 39}, 301-333 (2006).
\bibitem[Eyma72]{Eymard} P. Eymard. 
\emph{Moyennes invariantes et repr\'esentations unitaires},
\newblock
 Lecture Notes in Mathematics, Vol. 300. 
\newblock Springer-Verlag, 1972. 

\bibitem[Leuz03]{Leuz} E. Leuzinger.
Kazhdan’s property (T), $L^2$-spectrum and isoperimetric 
inequalities for locally symmetric spaces.
\emph{Comment. Math. Helv.} {\bf 78}, 116-133 (2003). 


\bibitem[Marg91]{Margulis} G.A. Margulis. \newblock \emph{Discrete
subgroups of semisimple Lie groups}, 
\newblock %
Springer-Verlag, 1991.


\bibitem[Ragh72]{Raghunathan} M.S. Raghunathan.
\newblock \emph{Discrete
subgroups of Lie groups}, 
\newblock %
Springer-Verlag, 1972.

\bibitem[Wang70]{Wang} H.C.Wang. On the the deformation of a lattice in a Lie group.
\newblock\emph{Amer. J. Math..} {\bf 85},189-212 (1963).

\bibitem[Wang76]{Wang2} S.P.Wang. Homogeneous spaces with finite invariant measure.
\newblock\emph{Amer. J. Math..} {\bf 98},311-324 (1976).

%\bibitem[Zimm84]{Zimmer} R.J. Zimmer.  
%\newblock \emph{Ergodic theory and semisimple groups}, 
%\newblock %
%Birkh\"auser, 1984.




\end{thebibliography}
\end{document}